\documentclass{article}


\usepackage{amsmath}
\usepackage{amsfonts}
\usepackage{amssymb}
\usepackage{amsthm}
\usepackage{array}
\usepackage{chngpage}
\usepackage{comment}
\usepackage{float}
\usepackage[OT2,T1]{fontenc}
\usepackage{graphicx,caption}
\usepackage[export]{adjustbox}
\usepackage{longtable}
\usepackage{mathrsfs}
\usepackage{wrapfig}
\usepackage{cutwin}
\usepackage{shapepar}
\usepackage{tikz}
\usepackage[lowtilde]{url}
\usetikzlibrary{matrix,arrows}


\topmargin -0.5in
\textheight 9in
\oddsidemargin -0.125in
\textwidth 6.75in



\newcommand{\F}{\mathbb{F}}

\newcommand{\Q}{\mathbb{Q}}

\newcommand{\Z}{\mathbb{Z}}


\newcommand{\cE}{\mathcal{E}}

\newcommand{\cO}{\mathcal{O}}
\newcommand{\cP}{\mathcal{P}}

\newcommand{\cS}{\mathcal{S}}


\newcommand{\sS}{\mathscr{S}}


\DeclareSymbolFont{cyrletters}{OT2}{wncyr}{m}{n}
\DeclareMathSymbol{\sha}{\mathalpha}{cyrletters}{"58}

\newcommand{\eps}{\varepsilon}

\newcommand{\ds}{\displaystyle}
\newlength{\strutheight}
\settoheight{\strutheight}{\strut}


\newtheorem{theorem}{Theorem}[section]

\newtheorem{conjecture}[theorem]{Conjecture}
\newtheorem{proposition}[theorem]{Proposition}

\author{Alex Cowan\\cowan@math.harvard.edu}
\title{Conjecture: $100\%$ of elliptic surfaces over $\Q$ have rank zero}
\date{}

\begin{document}
\maketitle
\begin{abstract}
\noindent
Based on an equation for the rank of an elliptic surface over $\Q$ which appears in the work of Nagao, Rosen, and Silverman, we conjecture that $100\%$ of elliptic surfaces have rank $0$ when ordered by the size of the coefficients of their Weierstrass equations, and present a probabilistic heuristic to justify this conjecture. We then discuss how it would follow from either understanding of certain $L$-functions, or from understanding of the local behaviour of the surfaces. Finally, we make a conjecture about ranks of elliptic surfaces over finite fields, and highlight some experimental evidence supporting it.
\end{abstract}

\section{Introduction}\label{introduction}
Let $\cE$ be an elliptic surface over $\Q$. Fix a Weierstrass equation
$$\cE: y^2 = x^3 + A(T)x + B(T)$$
with $A(T), B(T) \in \Z[T]$ and $4A(T)^3 + 27B(T)^2 \not\equiv 0$. Given an integer $t$ such that $4A(t)^3 + 27B(t)^2\neq 0$, let $\cE_t$ denote the elliptic curve over $\Q$ with Weierstrass equation
$$\cE_t: y^2 = x^3 + A(t)x + B(t).$$
Given an elliptic curve $E/\Q$ and a prime $p$, define $a_p(E)$ to be the $p^{\text{th}}$ coefficient of the $L$-function attached to $E$.\\
\\
It was conjectured by Nagao \cite{nagao} that
\begin{equation}\label{nagaoeqn}
  \text{rank }\cE(\Q(T)) = -\lim_{X\to\infty}\frac{1}{X}\sum_{p<X}\sum_{t=1}^pa_p(\cE_t)\frac{\log p}{p}.
\end{equation}
Rosen and Silverman \cite{rosen_silverman} proved that (\ref{nagaoeqn}) held if one assumed Tate's conjecture and that a certain $L$-function didn't vanish on the right edge of its critical strip. In particular, (\ref{nagaoeqn}) holds unconditionally for rational elliptic surfaces.\\
\\
In light of (\ref{nagaoeqn}), we think it's likely that the average rank of elliptic surfaces over $\Q$ is $0$, essentially because the average value of $a_p(\cE_t)$ should be $0$. We give a more thorough justification for this belief in section \ref{justification}.\\
\\
There are several ways the belief that the average rank of elliptic surfaces is $0$ can be formulated as a precise conjecture. For convenience, we introduce the following framework for discussing possible formulations of such a conjecture.\\
\\
Let $\cS(M)$ with $m \in \Z_{>0}$ be a sequence of subsets of the set of all elliptic surfaces over $\Q$, with the properties that $\cS(M)$ is finite for every $M$, and $\cS(M) \subseteq \cS(M')$ whenever $M \leq M'$. We'll say that $\cS = \cup_{M=1}^\infty \cS(M)$ is a \textit{family} of elliptic surfaces, and the filtration given by $M$ is an \textit{ordering} of that family. This parallels the language used when discussing statistics of ranks of elliptic curves, where, for example, people might discuss ``the family of quadratic twists of a fixed elliptic curve, ordered by conductor''.\\
\\
Define the \textit{average rank of the family $\cS$} to be the quantity
  $$\lim_{M\to\infty}\frac{1}{\#\cS(M)}\sum_{\cE \in \cS(M)} \text{rank }\cE(\Q(T)).$$
We believe that the average rank of many families $\cS$ will be $0$. In formulating conjecture \ref{avgrank0} below, we pick a specific family for the sake of concreteness, but there are many other reasonable choices.\\
\\
Conversely, there are several examples in the literature of families of elliptic surfaces over $\Q$ which were constructed to have strictly positive rank \cite{elkies} \cite{fermigier}. Average ranks of the elliptic curves coming from specializations of elliptic surfaces have been studied as well \cite{bdd} \cite{fermigier} \cite{silverman}, and in some situations the average ranks of the specializations can exhibit rich and possibly unexpected behaviour.\\
\\
To formulate conjecture \ref{avgrank0} below, we first define, for positive integers $d$ and $M$, the set
$$\cP_d(M) = \left\{p \in \Z[T]\,:\, \text{deg}(p) = d,\, \mu(p) < M\right\},$$
where $\mu(p)$ is the Mahler measure of $p$ (using the height of $p$ would also be reasonable here). Set
$$\cS_{m,n}(M) := \{\cE: y^2 = x^3 + A(T)x + B(T)\,:\, A\in \cP_m(M^2),\, B\in\cP_n(M^3),\, 4A(T)^3 + 27B(T)^2 \not\equiv 0\}.$$
Then we believe the average rank of the family $\cS = \cS_{m,n}$ will be $0$ whenever $m$ and $n$ are both positive, i.e.
\begin{conjecture}\label{avgrank0} For any fixed positive integers $m$ and $n$, we have
  $$\lim_{M\to\infty}\frac{1}{\#\cS_{m,n}(M)}\sum_{\cE \in \cS_{m,n}(M)}\text{\emph{rank }}\cE(\Q(T)) = 0.$$
\end{conjecture}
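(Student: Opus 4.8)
\quad The plan is to average Nagao's identity~(\ref{nagaoeqn}) over the family and to extract cancellation from the average. Granting Tate's conjecture together with the nonvanishing hypothesis of the Rosen--Silverman theorem (both unconditional when $\max(3m,2n)\le 12$, i.e.\ when the surfaces involved are rational), every $\cE\in\cS_{m,n}(M)$ satisfies
$$\text{rank }\cE(\Q(T))\;=\;-\lim_{X\to\infty}\frac{1}{X}\sum_{p<X}\frac{\log p}{p}\sum_{t=1}^{p}a_p(\cE_t).$$
First note that the average rank is well defined and bounded: by the Shioda--Tate formula $\text{rank }\cE(\overline{\Q}(T))$ is at most the second Betti number of the associated elliptic surface, hence at most a constant $c(m,n)$ depending only on $m$ and $n$, and $\text{rank }\cE(\Q(T))$ is no larger. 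So Conjecture~\ref{avgrank0} is equivalent to the statement that the proportion of $\cE\in\cS_{m,n}(M)$ with positive rank tends to $0$. It is worth emphasizing that $\text{rank }\cE(\overline{\Q}(T))$ need \emph{not} be small — it equals $8$ for a generic rational elliptic surface — so the real content is that, for almost all $\cE$, the Galois action on the geometric Mordell--Weil group has no nonzero fixed vector; it is exactly this arithmetic input that the averaging of Nagao's formula is meant to supply.

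Summing the displayed identity over $\cE\in\cS_{m,n}(M)$ (a finite sum, so it commutes with the limit in $X$) and dividing by $\#\cS_{m,n}(M)$, the conjecture becomes
$$\lim_{M\to\infty}\ \lim_{X\to\infty}\frac{1}{X}\sum_{p<X}\frac{\log p}{p}\,S(p,M)=0,\qquad S(p,M):=\frac{1}{\#\cS_{m,n}(M)}\sum_{\cE\in\cS_{m,n}(M)}\ \sum_{t=1}^{p}a_p(\cE_t).$$
For $p$ fixed and $M\to\infty$ the reductions modulo $p$ of the polynomials $A\in\cP_m(M^2)$ and $B\in\cP_n(M^3)$ equidistribute among residues, so for each $t$ the pair $(A(t)\bmod p,\,B(t)\bmod p)$ equidistributes in $\F_p^2$, whence $S(p,M)\to c_p:=\tfrac1p\sum_{a,b\in\F_p}a_p(E_{a,b})$ as $M\to\infty$, where $E_{a,b}\colon y^2=x^3+ax+b$. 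Writing $a_p(E_{a,b})$ as $p$ minus the number of affine solutions of $y^2=x^3+ax+b$ (valid up to an $O(p)$ error from the $O(p)$ singular pairs $(a,b)$) and noting that for each $(x,y)$ exactly one value of $b$ occurs for each $a$, one gets $\sum_{a,b\in\F_p}a_p(E_{a,b})=p^3-p^3+O(p)=O(p)$ with an absolute implied constant, so $|c_p|=O(1)$ uniformly in $p$. If the two limits above could be interchanged, one would conclude $\lim_{X}\tfrac1X\sum_{p<X}\tfrac{\log p}{p}c_p=O(\log X/X)\to 0$, which is precisely the heuristic of Section~\ref{justification}.

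The main obstacle is that the convergence $S(p,M)\to c_p$ is not uniform in $p$: a polynomial of Mahler measure below $M$ cannot realize every residue modulo a prime $p$ that is large relative to $M$, so the equidistribution step — and with it the bound $|S(p,M)|=O(1)$ — is available only for $p$ up to roughly a power of $M$. For the remaining primes the only unconditional estimate is the trivial $|S(p,M)|\ll_{m,n}p$ (Deligne's bound gives $\sum_{t\bmod p}a_p(\cE_t)=O_{m,n}(p)$ for each surface, and this order of magnitude is genuine — the rank is read off precisely from this term), which is far too weak to survive $\tfrac1X\sum_{p<X}$. Making the argument rigorous thus requires controlling the primes comparable to, and larger than, $M$, and this appears to need one of the two inputs developed in the sections that follow. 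The analytic route is an effective, family-uniform form of Rosen--Silverman: an estimate for $\tfrac1X\sum_{p<X}\tfrac{\log p}{p}S(p,M)$ with an error uniform over $\cE\in\cS_{m,n}(M)$, which amounts to a uniform zero-free region — or at least uniform nonvanishing on the edge of the critical strip — for the $L$-functions occurring in Rosen--Silverman as $\cE$ ranges over the family. The local route is to establish directly an equidistribution statement for the tuples $(A(t)\bmod p,\,B(t)\bmod p)$, strong enough to hold on average over the family with $p$ ranging up to a large power of $M$; this is a sieve problem about values of polynomials of bounded Mahler measure, and stretching the admissible range of $p$ all the way to the relevant $X$ is where the genuine difficulty lies. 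Either way, this is why the statement is offered as a conjecture rather than a theorem.
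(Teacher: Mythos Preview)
The statement is a \emph{conjecture}; the paper does not prove it, and you correctly acknowledge at the end that no proof is on offer. So the relevant comparison is between heuristic outlines, not proofs.

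Your outline and the paper's run parallel but differ in packaging. The paper's Section~\ref{justification} models the $a_p(\cE_t)$ by independent random variables $A_{p,t}$ with the Birch distribution and invokes Kolmogorov's three-series theorem to argue that the Nagao sum is almost surely $o(X^{1/2+\eps})$; the obstacle discussion in Section~\ref{analyticproofsection} is phrased in terms of choosing, for each $\cE$, a cutoff $X_\cE$ so that both the truncated $a_p$-average (condition~(\ref{apcond})) and the Tauberian error $C_\cE X_\cE^{-\delta_\cE}$ (condition~(\ref{errorcond})) are simultaneously small on average. You instead average Nagao's formula directly over the family, compute the pointwise limit $S(p,M)\to c_p=\tfrac{1}{p}\sum_{a,b}a_p(E_{a,b})=O(1)$ via equidistribution of $(A(t),B(t))\bmod p$, and locate the obstruction in the non-uniformity of this convergence as $p$ grows past a power of $M$. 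These are the same obstruction viewed from two sides: your ``primes up to a power of $M$'' is the paper's ``$M$ large relative to $X_\cE$'' for condition~(\ref{apcond}), and your call for a family-uniform effective Rosen--Silverman is the paper's condition~(\ref{errorcond}) with its dependence on zeros of $N(\cE/\Q,s)$. Your additional observation that Shioda--Tate bounds the rank by a constant $c(m,n)$, reducing the conjecture to a density-zero statement for positive rank, is a clean point the paper does not make explicitly, and it dovetails with the finite-field approach in Section~\ref{finitefieldsection}.

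One small caution: your equidistribution claim for $(A(t)\bmod p,\,B(t)\bmod p)$ as $M\to\infty$ needs the Mahler-measure box to equidistribute modulo $p$ coefficientwise, which is fine for $p$ fixed, but the implicit error there is exactly the quantity whose uniformity in $p$ you later flag as the crux --- so the step is heuristically right but does not buy anything beyond what you already concede. Otherwise the proposal accurately mirrors the paper's stance: a plausible mechanism, two candidate routes (analytic uniformity of the $L$-functions, or local equidistribution/sieve), and an honest admission that neither is presently available.
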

\noindent
In section \ref{analyticproofsection} we discuss the main obstacles for proving conjecture \ref{avgrank0} using the analytic framework in \cite{rosen_silverman}. In section \ref{finitefieldsection} we outline an approach to conjecture \ref{avgrank0} based on investigating statistics of ranks of elliptic surfaces over finite fields.

\section{Acknowledgments}
We thank Noam Elkies, Bjorn Poonen, and Michael Snarski for helpful discussions. This work was supported by grant 550031 from the Simons Foundation.

\section{Probabilistic heuristics}\label{justification}
For a fixed prime $p$, Birch \cite{birch} gave all moments of the distribution of $a_p(E)$ when $E$ is chosen by selecting a (possibly singular) Weierstrass equation with coefficients in $\F_p$ uniformly at random. Let $(A_{p,t})$ be a sequence of independent random variables indexed by a prime $p$ and a positive integer $t$, with the property that, for every $t$, the random variable $A_{p,t}$ has the same distribution as the values $a_p(E)$ for $E$ a Weierstrass equation in $\F_p$ chosen uniformly at random, as in Birch's work. We highlight the following property of the sequence $(A_{p,t})$:
\begin{proposition}\label{threeseries} For any $\eps > 0$, the series
  $$\frac{1}{X^{\frac{1}{2}+\eps}}\sum_{p < X}\sum_{t = 1}^p A_{p,t}\frac{\log p}{p}$$
  converges to $0$ with probability $1$ as $X$ goes to infinity.
\end{proposition}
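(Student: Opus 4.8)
The name of the proposition points at the tool: Kolmogorov's three-series theorem, applied prime-by-prime, followed by Kronecker's lemma. Set $Z_p := \frac{\log p}{p}\sum_{t=1}^{p}A_{p,t}$, so that the quantity in the statement is $X^{-1/2-\eps}\sum_{p<X}Z_p$. The random variables $Z_p$ are independent over $p$, and for each fixed $p$ the $A_{p,1},\dots,A_{p,p}$ are i.i.d.\ copies of Birch's distribution at $p$. The first step is to reduce the claim to the almost sure convergence of the series $\sum_{p}Z_p/p^{1/2+\eps}$. Granting that, list the primes as $p_1<p_2<\cdots$ and apply Kronecker's lemma with the increasing weights $b_k=p_k^{1/2+\eps}$: it gives $p_K^{-1/2-\eps}\sum_{k\le K}Z_{p_k}\to 0$ almost surely. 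Since $\sum_{p<X}Z_p$ equals $\sum_{k\le K}Z_{p_k}$ for $p_K<X\le p_{K+1}$, while $X^{1/2+\eps}\ge p_K^{1/2+\eps}$ on that range, this yields $X^{-1/2-\eps}\sum_{p<X}Z_p\to 0$, as wanted.

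To establish the almost sure convergence of $\sum_p W_p$, where $W_p:=Z_p/p^{1/2+\eps}$, I would verify the three hypotheses of the three-series theorem. The Hasse bound $|A_{p,t}|\le 2\sqrt{p}$ gives $|W_p|\le 2p^{-\eps}\log p<1$ for all but finitely many $p$, so $\sum_p\mathbf{P}(|W_p|>1)<\infty$ and the truncation in the theorem is vacuous in the tail. For the variances, independence and the identical distribution of the $A_{p,t}$ give $\operatorname{Var}(Z_p)=\frac{(\log p)^2}{p}\operatorname{Var}(A_{p,1})\le 4(\log p)^2$ (again by Hasse, or with the sharper $\operatorname{Var}(A_{p,1})\asymp p$ from Birch's second-moment formula), hence $\sum_p\operatorname{Var}(W_p)\le\sum_p 4(\log p)^2 p^{-1-2\eps}<\infty$. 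Finally, $\mathbf{E}[Z_p]=(\log p)\,\mathbf{E}[A_{p,1}]$, and the average of $a_p$ over the chosen model of Weierstrass equations mod $p$ vanishes for $p>3$: in the representative short-form case, writing $a_p(y^2=x^3+ax+b)=-\sum_{x\in\F_p}\chi(x^3+ax+b)$ with $\chi$ the quadratic character, summing over $b\in\F_p$ already annihilates the inner character sum, so $\mathbf{E}[A_{p,1}]=0$; thus the series of (truncated) means is eventually identically $0$. All three series converge, and the reduction above finishes the proof.

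The only step that uses more than the Hasse bound is the first-moment input $\mathbf{E}[A_{p,1}]=0$, and this is where the structure of Birch's family genuinely enters; even a weaker bound $\mathbf{E}[A_{p,1}]=O(p^{-1/2})$ would suffice, since $\sum_p(\log p)p^{-1-\eps}<\infty$. The mild subtleties to pin down are the treatment of singular Weierstrass equations in the model — for which Birch's ``$a_p$'' is still the naive value $-\sum_x\chi(x^3+ax+b)$, so the vanishing of the mean is unaffected — and the primes $p=2,3$, where the short Weierstrass form is not general; both concern only finitely many primes and so change none of the convergence statements. I do not anticipate a serious obstacle here: once $Z_p$ is recognized as a bounded, (essentially) mean-zero, summable-variance independent sequence, the three-series theorem and Kronecker's lemma do all the work.
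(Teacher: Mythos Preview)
Your proof is correct and follows essentially the same route as the paper's: bound the variances of the independent summands and invoke Kolmogorov's three-series theorem. The paper handles the passage from series convergence to the normalized partial sums slightly differently---it applies three-series with exponent $\tfrac12+\tfrac{\eps}{2}$ and then multiplies by $X^{-\eps/2}$, leaving the Kronecker-type step implicit---whereas you make both Kronecker's lemma and the first-moment vanishing $\mathbf{E}[A_{p,1}]=0$ explicit; in this respect your write-up is a bit more careful than the paper's own argument.
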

\begin{proof} By construction we have
  $$\text{Var}\!\left(\frac{1}{X^{\frac{1}{2}+\frac{\eps}{2}}}A_{p,t}\frac{\log p}{p}\right) \ll \frac{(\log p)^2}{X^{1+\eps}p}.$$
  Thus
  $$\text{Var}\!\left(\frac{1}{X^{\frac{1}{2}+\frac{\eps}{2}}}\sum_{p < X}\sum_{t = 1}^p A_{p,t}\frac{\log p}{p}\right) \ll \frac{1}{X^{1+\eps}}\sum_{p<X}(\log p)^2 \ll 1.$$
  By Kolmogorov's three-series theorem, as stated in \cite[theorem 2.5.8]{durrett} for example, it follows that the series
  $$\frac{1}{X^{\frac{1}{2}+\frac{\eps}{2}}}\sum_{p < X}\sum_{t = 1}^p A_{p,t}\frac{\log p}{p}$$
  converges almost surely. Hence the series
  $$\frac{1}{X^{\frac{1}{2}+\eps}}\sum_{p < X}\sum_{t = 1}^p A_{p,t}\frac{\log p}{p} = \frac{1}{X^{\frac{\eps}{2}}} \left( \frac{1}{X^{\frac{1}{2}+\frac{\eps}{2}}}\sum_{p < X}\sum_{t = 1}^p A_{p,t}\frac{\log p}{p}\right)$$
  converges to $0$ almost surely.
\end{proof}
\noindent
Proposition \ref{threeseries} is relevant because it suggests that, if we believe that the values $a_p(\cE_t)$ are distributed in the same way $a_p(E)$ is for elliptic curves $E/\F_p$ chosen uniformly at random, then we should expect that $100\%$ of the time the quantity in (\ref{nagaoeqn}) is $0$.\\
\\
There is evidence to support the belief that the values $a_p(\cE_t)$ will be distributed in this way. If, instead of fixing $p$ and letting $t$ vary initially, we fix $t$ and let $p$ vary, then, for all $\eps > 0$ the series
\begin{equation}\label{asconvofap}
  \frac{1}{X^\eps}\sum_{p < X}A_{p,t}\frac{\log p}{p}
\end{equation}
converges to $0$ as $X$ goes to infinity with probability $1$. This suggests that, for a fixed elliptic curve $E_t/\Q$, we should expect that
\begin{equation}\label{heathbrowneqn}
  \sum_{p < X} a_p(E_t)\frac{\log p}{p} \ll X^\eps
\end{equation}
for all $\eps > 0$, because we should expect that the reductions $E_t/\F_p$ will be distributed uniformly at random. The bound (\ref{heathbrowneqn}) was proven by Heath-Brown \cite{heath-brown}, so the reductions $E_t/\F_p$ do indeed behave uniformly randomly in this case.\\
\\
The popular belief that the average rank of elliptic curves over $\Q$ is $\frac{1}{2}$ can also support the idea that these sums of $a_p$ values behave randomly in the way described. The Birch and Swinnerton-Dyer conjecture (BSD) connects the rank of an elliptic curve $E/\Q$ to the coefficients $a_p(E)$ of its $L$-function. The original formulation of BSD was
$$\prod_{p < X}\frac{p+1 - a_p(E)}{p} \sim C_E(\log X)^{\text{rank } E(\Q)}$$
for some constant $C_E$ which depends on $E$. See work of Goldfeld \cite{goldfeld}, K. Conrad \cite{conrad}, and Kuo-R. Murty \cite{kuo_murty} for some treatment of this specific formulation of BSD. One consequence of this form of BSD is \cite{rubinstein}
\begin{equation}\label{rubeqn}
  \text{rank }E(\Q) = \frac{1}{2} - \frac{1}{\log X}\int_1^X\frac{1}{x}\sum_{p<x}a_p(E)\log p\,\frac{dx}{x} + \cO\Big(\frac{1}{\log X}\Big).
\end{equation}
It is widely believed that many families of elliptic curves over $\Q$ have average rank $\frac{1}{2}$. This belief suggests that the quantity
\begin{equation}\label{rubquantity}
  \frac{1}{x}\sum_{p<x}a_p(E)\log p
\end{equation}
appearing in equation (\ref{rubeqn}) should average to $0$ in those families. However, because the error term in (\ref{rubeqn}) depends on $E$, even knowing that the average rank of elliptic curves was $\frac{1}{2}$ wouldn't be enough to conclude that the quantity (\ref{rubquantity}) averages to $0$. While it would be surprising if the error terms in (\ref{rubeqn}) did not average to $0$ as well, controlling these error terms is difficult, and is the main obstacle in proving anything about average ranks from this perspective.

\section{Obstacles for analytic proofs}\label{analyticproofsection}
In this section we discuss what obstacles exist which make proving conjecture \ref{avgrank0} difficult within the framework established in \cite{rosen_silverman}. In proving Nagao's conjecture, Rosen and Silverman first prove an analytic version of his conjecture:
\begin{equation}\label{annagao}
  \underset{s = 2}{\text{Res}}\,\sum_p\sum_{t=1}^p a_p(\cE_t)\frac{\log p}{p^s} = -\text{rank }E(\Q(T)).
\end{equation}
To do this, they introduce the following notation:
\begin{itemize}
\item $\ds{L_2(\cE/\Q,s)}$ is the Hasse-Weil $L$-function of $\cE/\Q$ attached to $\ds{H^2_{\acute{e}t}(\cE/\bar{\Q},\Q_\ell)}$.
\item $\ds{\text{NS}(\cE/\bar{\Q})}$ is the N\'{e}ron-Severi group of $\cE/\bar{\Q}$.
\item $\ds{\sS}$ is the trivial part of $\text{NS}(\cE/\bar{\Q}) \otimes \Q$, generated by the image of the zero section and by all components of all fibers.
\item $\ds{\sS_\ell(1)}$ is the Tate twist $\sS\otimes T_\ell(\bar{\Q}^*)$ of the $\text{Gal}(\bar{\Q}/\Q)$-module $\sS$.
\item $\ds{L(\sS_\ell(1),s)}$ is the Artin $L$-function attached to the representation $\sS_\ell(1)$.
\item $\ds{N(\cE/\Q,s) = \frac{L_2(\cE/\Q,s)}{L(\sS_\ell(1),s)}}.$
\end{itemize}
Then, assuming the Tate conjecture, Silverman and Rosen prove that both sides of (\ref{annagao}) are equal to
$$\underset{s = 2}{\text{Res}}\,\frac{d}{ds}\log N(\cE/\Q,s).$$
The conjecture (\ref{nagaoeqn}) then follows from standard analytic techniques applied to the series $\frac{d}{ds}\log N(\cE/\Q,s)$.\\
\\
We now discuss what might be involved in a proof of conjecture \ref{avgrank0} using these analytic techniques. Suppose that there exist constants $\delta_\cE > 0$ and $C_\cE$ such that
$$\left|\frac{1}{X}\sum_{p < X}\sum_{t = 1}^p a_p(\cE_t)\frac{\log p}{p} + \text{rank }\cE(\Q(T))\right| < C_\cE X^{-\delta_\cE},$$
as is typical in this kind of setting. To prove conjecture \ref{avgrank0}, it would be sufficient to find, for every elliptic surface $\cE \in \cS$, a real number $X_\cE > 0$ depending only on $\cE$, such that both
\begin{equation}\label{apcond}
  \lim_{M\to\infty}\frac{1}{\#\cS(M)}\sum_{\cE \in \cS(M)}\frac{1}{X_{\cE}}\sum_{p < X_{\cE}}\sum_{t = 1}^p a_p(\cE_t)\frac{\log p}{p} = 0
\end{equation}
and
\begin{equation}\label{errorcond}
  \lim_{M\to\infty}\frac{1}{\#\cS(M)}\sum_{\cE \in \cS(M)}C_{\cE}X_{\cE}^{-\delta_{\cE}} = 0
\end{equation}
simultaneously. Condition (\ref{apcond}) requires that $M$ be ``large'' relative to the values $X_{\cE}$ so that the averages
$$\frac{1}{\#\cS(M)}\sum_{\cE \in \cS(M)}a_p(\cE_t)$$
for fixed $p$ and $t$ are small. Condition (\ref{errorcond}), on the other hand, would like for $M$ to be ``small'' relative to the values $X_{\cE}$. The difficulty, then, is to find some choice of $X_{\cE}$'s such that both of these conditions hold at once. We expect that showing that there is such a choice will be difficult. Condition (\ref{apcond}) might require a P\'{o}lya-Vinogradov style result, where one shows that there are no ``conspiracies'' among the values $a_p(\cE_t)$ that might cause them to behave differently than random variables. Condition (\ref{errorcond}) might require subconvexity bounds, and the number $\delta_{\cE}$ will depend on the locations of the zeroes of $N(\cE/\Q,s)$.

\section{An approach via finite fields}\label{finitefieldsection}
Let $\cS_{\ell;m,n}$ denote the set of elliptic surfaces $\cE/\F_\ell: y^2 = x^3 + A(T)x + B(T)$ with $\deg(A) = m$ and $\deg(B) = n$. This set is finite. Let $\rho_\ell(m,n)$ denote the proportion of elliptic surfaces in $\cS_{\ell;m,n}$ which have positive rank.
\begin{conjecture}\label{finitefieldconjecture}
  For every prime $\ell_0$, and every pair of integers $m_0, n_0 > 0$,
  $$\lim_{\ell\to\infty}\rho_{\ell}(m_0,n_0) = \lim_{m\to\infty}\rho_{\ell_0}(m,n_0) = \lim_{n\to\infty}\rho_{\ell_0}(m_0,n) = \frac{1}{2}.$$
\end{conjecture}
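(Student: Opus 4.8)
\noindent
We sketch a possible line of attack on conjecture \ref{finitefieldconjecture}. The plan is to follow the standard template for rank statistics in families of $L$-functions over function fields: convert the Mordell--Weil rank into the order of vanishing of an $L$-function at its central point, interpret that $L$-function cohomologically, and then read off the statistics from a big-monodromy computation together with an equidistribution theorem. Via the Tate conjecture for $\cE$ (known in many cases; alternatively one can use the parity conjecture for $\cE/\F_\ell(T)$ together with the trivial bound $\text{rank }\cE(\F_\ell(T))\leq\text{ord}_{s=1}L$), the problem reduces to showing that
$$\text{ord}_{s=1}\,L(\cE/\F_\ell(T),s)$$
is positive for a proportion $\to\tfrac{1}{2}$ of the surfaces $\cE\in\cS_{\ell;m,n}$. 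Here $L(\cE/\F_\ell(T),s)$ is the $L$-function of the middle-extension $\Q_{\ell'}$-sheaf $\mathcal{F}$ on $\mathbb{P}^1_{\F_\ell}$ attached to the fibration $\pi\colon\cE\to\mathbb{P}^1$ (with $\ell'\neq\ell$), and as $\cE$ ranges over $\cS_{\ell;m,n}$ --- equivalently, as $(A,B)$ ranges over the corresponding space of pairs of polynomials with $4A^3+27B^2\not\equiv 0$ --- these $L$-functions are the reversed characteristic polynomials of $\text{Frob}_\ell$ acting on $H^1(\mathbb{P}^1_{\overline{\F}_\ell},\mathcal{F})$, a space of dimension $N$ that is constant --- and computable from $m$ and $n$ --- on a dense open subset $U$ of the parameter space, with only finitely many lower-dimensional strata on which $N$ is smaller. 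Poincar\'e duality on $\mathbb{P}^1$ composed with the symplectic autoduality of $\mathcal{F}$ equips $H^1(\mathbb{P}^1,\mathcal{F})$ with a \emph{symmetric} autoduality, so the associated lisse sheaf $\mathcal{G}$ on $U$ has orthogonal symmetry type, and $\text{ord}_{s=1}L(\cE/\F_\ell(T),s)$ is the multiplicity of the eigenvalue $1$ of $\text{Frob}_\ell$ on the suitably Tate-twisted fibre of $\mathcal{G}$.

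Consider first the limit $\ell\to\infty$ with $m_0,n_0$ fixed. The plan is: (i) show that $\mathcal{G}$ is geometrically irreducible on $U$ with geometric monodromy group the \emph{full} orthogonal group $\mathrm{O}(N)$ --- crucially not merely $\mathrm{SO}(N)$, and not a smaller classical group; (ii) invoke Deligne's equidistribution theorem, so that the Frobenius conjugacy classes $\{\text{Frob}_t: t\in U(\F_\ell)\}$ become equidistributed in $\mathrm{O}(N)$ for Haar measure as $\ell\to\infty$; (iii) use that $\mathrm{O}(N)$ is the disjoint union of the two equal-measure cosets $\mathrm{SO}(N)$ and $\mathrm{O}(N)\smallsetminus\mathrm{SO}(N)$ --- which are precisely the two values of the sign of the functional equation --- together with the elementary fact that, whatever the parity of $N$, a Haar-random element of one coset has no eigenvalue $1$ (hence gives order of vanishing $0$), while \emph{every} element of the other coset has $1$ as an eigenvalue, with multiplicity exactly $1$ outside a proper closed subvariety (hence gives order of vanishing $1$). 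Since the exceptional subvariety contributes $o(\ell^{\dim U})$ points and the lower-dimensional strata contribute $\cO(\ell^{\dim U-1})$ points, one concludes that a proportion $\tfrac{1}{2}+o(1)$ of the surfaces in $\cS_{\ell;m_0,n_0}$ have $\text{ord}_{s=1}L=0$ and $\tfrac{1}{2}+o(1)$ have $\text{ord}_{s=1}L=1$, whence $\rho_\ell(m_0,n_0)\to\tfrac{1}{2}$.

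The main obstacle is step (i). The containment $G_{\mathrm{geom}}\subseteq\mathrm{O}(N)$ is just the autoduality above; the real content is that $G_{\mathrm{geom}}$ \emph{contains} $\mathrm{SO}(N)$ --- so the rank is as small as the symmetry type allows --- and yet is \emph{not contained} in $\mathrm{SO}(N)$, so that the sign of the functional equation genuinely varies rather than being constant (if $G_{\mathrm{geom}}$ were $\mathrm{SO}(N)$ one would instead get $\rho_\ell\to 0$ or $\rho_\ell\to 1$). Following Katz, one would establish this by a local analysis at the boundary of $U$: degenerate $A$, $B$, or the discriminant $\Delta=-16(4A^3+27B^2)$ so that $\cE$ acquires exactly one new singular fibre, or so that one of these polynomials develops a single additional double root, and identify the resulting local monodromy of $\mathcal{G}$ as a reflection or a well-controlled pseudoreflection. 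A sufficient supply of reflections, combined with Larsen's alternative (which excludes small or exceptional monodromy, e.g.\ via a fourth-moment computation) and the classification of irreducible reflection groups, then pins $G_{\mathrm{geom}}$ down to all of $\mathrm{O}(N)$. Carrying this out uniformly in $m_0$ and $n_0$, and accounting for the various degeneration strata, is the part I expect to be genuinely hard; it is the precise incarnation, over finite fields, of the principle from section \ref{analyticproofsection} that there should be no ``conspiracies'' among the values $a_p(\cE_t)$.

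The limits $m\to\infty$ and $n\to\infty$ with $\ell=\ell_0$ fixed require a different input, since Deligne's equidistribution theorem needs the size of the field to grow. Here one would instead prove directly that (a) the global root number $W(\cE_{A,B})=\prod_v W_v$ equals $-1$ for a proportion $\tfrac{1}{2}+o(1)$ of the surfaces --- by factoring it into local root numbers at the bad places, expressing those in terms of the factorization types of $\Delta$ and of $A$ over $\F_{\ell_0}[T]$, and establishing the cancellation $\sum_{A}W(\cE_{A,B})=o(\#)$ by a sieve and character-sum argument resting on Weil's bounds --- and (b) among the surfaces with $W=+1$ a proportion $1-o(1)$ satisfy $L(\cE/\F_{\ell_0}(T),1)\neq 0$, while among those with $W=-1$ a proportion $1-o(1)$ satisfy $\text{ord}_{s=1}L=1$, via a mollified first- and second-moment computation for $L(\cE/\F_{\ell_0}(T),1)$ over the family in the spirit of Iwaniec--Sarnak. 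Steps (a) and (b) are the function-field analogues of, respectively, the P\'olya--Vinogradov and the subconvexity/moment obstacles already flagged in section \ref{analyticproofsection}, and I would not expect either to be easier than the monodromy computation in step (i).
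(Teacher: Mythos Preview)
The statement you are addressing is labelled a \emph{conjecture} in the paper, not a theorem: the paper gives no proof, only a reference to computational evidence and a discussion of how the conjecture would bear on conjecture~\ref{avgrank0}. There is therefore no proof in the paper to compare your proposal against.

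Your proposal is itself not a proof but a programme --- which you explicitly acknowledge --- and as a programme it is a reasonable one: the Katz--Sarnak philosophy of translating rank statistics into a big-monodromy statement, together with Deligne equidistribution for the $\ell\to\infty$ limit, is exactly the machinery one would reach for here. You correctly identify the crux as establishing $G_{\mathrm{geom}}=\mathrm{O}(N)$ (not merely $\mathrm{SO}(N)$, so that the sign of the functional equation genuinely varies), and you are honest that this step is hard and not carried out. For the fixed-$\ell_0$, growing-$m$ or growing-$n$ limits your root-number-plus-mollified-moments outline is more speculative: the nonvanishing step (b) is an open problem even in much simpler families, and that part of the sketch should be read as a wish list rather than a strategy. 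But since the paper proves nothing about conjecture~\ref{finitefieldconjecture} either, none of this constitutes a gap \emph{relative to the paper} --- you have written an outline of what a proof might eventually require, and the paper does not go even that far.
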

\noindent This conjecture, beyond being interesting in its own right, provides an approach for proving conjecture \ref{avgrank0}. See \cite{lauder} for experimental evidence towards conjecture \ref{finitefieldconjecture}, where $\rho_\ell(m,n)$ is estimated computationally for $\ell = 7$, $n = 6, 12, 18, 24, 30$, and $m \leq n/2$.\\
\\
Let $N$ be a squarefree positive integer. Let $\cS^{(N)}(M)$ denote the subset of $\cS(M)$ for which $\text{gcd}(N, 4A(T)^3 + 27B(T)^2) = 1$. Then, for any $m$, $n$, and $M$,
\begin{equation}\label{crt}
  \frac{\#\{\cE \in \cS^{(N)}(M)\,:\,\cE/\F_\ell \text{ has positive rank for all } \ell|N\}}{\#\cS^{(N)}(M)} = \prod_{\ell|N}\rho_\ell(m,n) + \cO(M^{-1})
\end{equation}
by the Chinese remainder theorem, where $\cE/\F_\ell$ denotes the reduction mod $\ell$ of $\cE/\Q$.\\
\\
If $\cE/\Q$ has positive rank, then either the reduction $\cE/\F_\ell$ has positive rank, or the kernel of the reduction $\cE(\Q(T)) \to \cE(\F_\ell(T))$ is of finite index in $\cE(\Q(T))$. If this kernel was never of finite index then conjecture \ref{avgrank0} would follow from conjecture \ref{finitefieldconjecture} (as well as much weaker versions of this conjecture), via observation (\ref{crt}). The kernel of the reduction $\cE(\Q(T)) \to \cE(\F_\ell(T))$ is of finite index in $\cE(\Q(T))$ occasionally, but presumably not nearly enough for this approach to fail. However, proving as much seems difficult. The generators of $\cE(\Q(T))$ will map to the identity of $\cE(\F_\ell(T))$ if their denominators are divisible by $\ell$, so one is naturally lead to investigate the dependence of the height of the generators of $\cE(\Q(T))$ on the size of the coefficients of the Weierstrass model of $\cE/\Q$.
\bibliographystyle{plain}
\bibliography{avgrankbib}{}

\begin{thebibliography}{10}

\bibitem{bdd}
Sandro Bettin, Chantal David, and Christophe Delaunay.
\newblock Non-isotrivial elliptic surfaces with non-zero average root number.
\newblock {\em J. Number Theory}, 191:1--84, 2018.

\bibitem{birch}
B.~J. Birch.
\newblock How the number of points of an elliptic curve over a fixed prime
  field varies.
\newblock {\em J. London Math. Soc.}, 43:57--60, 1968.

\bibitem{conrad}
Keith Conrad.
\newblock Partial {E}uler products on the critical line.
\newblock {\em Canad. J. Math.}, 57(2):267--297, 2005.

\bibitem{durrett}
Rick Durrett.
\newblock {\em Probability---theory and examples}, volume~49 of {\em Cambridge
  Series in Statistical and Probabilistic Mathematics}.
\newblock Cambridge University Press, Cambridge, 2019.

\bibitem{elkies}
Noam~D. Elkies.
\newblock Three lectures on elliptic surfaces and curves of high rank.
\newblock \url{arxiv:0709.2908}, 2007.

\bibitem{fermigier}
St\'{e}fane Fermigier.
\newblock \'{E}tude exp\'{e}rimentale du rang de familles de courbes
  elliptiques sur {\bf {q}}.
\newblock {\em Experiment. Math.}, 5(2):119--130, 1996.

\bibitem{goldfeld}
Dorian Goldfeld.
\newblock Sur les produits partiels eul\'{e}riens attach\'{e}s aux courbes
  elliptiques.
\newblock {\em C. R. Acad. Sci. Paris S\'{e}r. I Math.}, 294(14):471--474,
  1982.

\bibitem{heath-brown}
D.~R. Heath-Brown.
\newblock The average analytic rank of elliptic curves.
\newblock {\em Duke Math. J.}, 122(3):591--623, 2004.

\bibitem{katz_sarnak}
Nicholas~M. Katz and Peter Sarnak.
\newblock Zeroes of zeta functions and symmetry.
\newblock {\em Bull. Amer. Math. Soc. (N.S.)}, 36(1):1--26, 1999.

\bibitem{kuo_murty}
Wentang Kuo and M.~Ram Murty.
\newblock On a conjecture of {B}irch and {S}winnerton-{D}yer.
\newblock {\em Canad. J. Math.}, 57(2):328--337, 2005.

\bibitem{lauder}
Alan G.~B. Lauder.
\newblock Ranks of elliptic curves over function fields.
\newblock {\em LMS J. Comput. Math.}, 11:172--212, 2008.

\bibitem{nagao}
Koh-ichi Nagao.
\newblock {${\bf Q}(T)$}-rank of elliptic curves and certain limit coming from
  the local points.
\newblock {\em Manuscripta Math.}, 92(1):13--32, 1997.
\newblock With an appendix by Nobuhiko Ishida, Tsuneo Ishikawa and the author.

\bibitem{rosen_silverman}
Michael Rosen and Joseph~H. Silverman.
\newblock On the rank of an elliptic surface.
\newblock {\em Invent. Math.}, 133(1):43--67, 1998.

\bibitem{rubinstein}
Michael~O. Rubinstein.
\newblock Elliptic curves of high rank and the {R}iemann zeta function on the
  one line.
\newblock {\em Exp. Math.}, 22(4):465--480, 2013.

\bibitem{silverman}
Joseph~H. Silverman.
\newblock The average rank of an algebraic family of elliptic curves.
\newblock {\em J. Reine Angew. Math.}, 504:227--236, 1998.

\end{thebibliography}

\end{document}